\documentclass[12pt]{article}
\usepackage{amsmath,amssymb,amsthm, comment}

\newtheorem{theorem}{Theorem}[section]
\newtheorem{thmy}{Theorem}

\newtheorem{lemma}[theorem]{Lemma}

\newcommand{\dd}{\displaystyle }

\def\barr{\begin{array}}
\def\earr{\end{array}}

\title{On a divisibility condition related to the sum of element orders of a finite group}
\author{Marius T\u arn\u auceanu}
\date{August 10, 2026}

\begin{document}

\maketitle

\begin{abstract}
Given a finite group $G$, we denote by $\psi(G)$ the sum of element orders of $G$. In this note, we determine finite groups $G$ such that $|H|-|K|$ divides $\psi(H)-\psi(K)$ for all subgroups $K\leq H\leq G$. Two weaker conditions are also proposed.
\end{abstract}

{\small
\noindent
{\bf MSC2020\,:} Primary 20D60; Secondary 20D15.

\noindent
{\bf Key words\,:} element orders, finite groups, ${\rm CP}_1$-groups.}

\section{Introduction}
Let $G$ be a finite group. In 2009, H. Amiri, S.M. Jafarian Amiri and I.M. Isaacs introduced in their paper \cite{1} the function
\begin{equation}
\psi(G)=\sum_{x\in G}o(x),\nonumber
\end{equation}where $o(x)$ denotes the order of $x$ in $G$. They proved the following basic theorem:

\begin{thmy}
If $G$ is a finite group of order $n$, then $\psi(G)\leq\psi(C_n)$, and we have equality if and only if $G$ is cyclic.
\end{thmy}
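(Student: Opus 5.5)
The natural route is induction on $n=|G|$, combined with the multiplicativity of $\psi$ on coprime direct products. For the cyclic group itself, $\psi(C_n)=\sum_{d\mid n} d\,\varphi(d)$, and this is multiplicative in $n$. In general, if $G=A\times B$ with $\gcd(|A|,|B|)=1$, then $o((a,b))=o(a)o(b)$, so $\psi(G)=\psi(A)\psi(B)$. The case where $G$ is a $p$-group can be handled directly. The number of elements of order at most $p^k$ is at most $\min(n,\ldots)$, but the cleaner observation is this: the elements of $G$ of order $p^k$ all lie in $\Omega$-type sets, and $C_{p^m}$ has exactly $\varphi(p^k)$ elements of order $p^k$. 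Order the elements of $G$ by decreasing order and compare with the corresponding list for $C_n$. Since the number of elements of order dividing $p^k$ in $C_n$ is $p^k$, which is the minimum possible given that the $p^k$-th power map has image of order at least $n/p^k$ in the cyclic case, a majorization argument gives $\psi(G)\le\psi(C_n)$. Equality forces an element of order $n$, hence $G$ is cyclic.

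For general $G$, I will use the inductive step of Amiri--Jafarian Amiri--Isaacs. Let $p$ be the largest prime divisor of $n$ and write $n=p^a m$ with $p\nmid m$. If $G$ has a normal Sylow $p$-subgroup $P$ and a cyclic complement behaving well, then Schur--Zassenhaus gives $G=P\rtimes H$. For $x=yh$, one shows $o(x)\le o(y')\,o(h)$ for suitable elements, from which $\psi(G)\le\psi(P)\psi(H)$. Then induction and the $p$-group case give $\psi(G)\le\psi(C_{p^a})\psi(C_m)=\psi(C_n)$. Equality propagates: it forces $P$ and $H$ to be cyclic, and also forces the action to be trivial (otherwise some product has strictly smaller order), so $G$ is cyclic.

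The main obstacle is the case where $G$ has no normal Sylow $p$-subgroup. Here I would show directly that $\psi(G)<\psi(C_n)$ by a counting estimate. An element $x$ with $p^a\mid o(x)$ generates a cyclic subgroup containing a Sylow $p$-subgroup $P$ and centralizing it, so $x\in C_G(P)$. If such elements are numerous, one gets control on $N_G(P)$. Otherwise, every element has order at most $n/p$, so $\psi(G)\le n\cdot n/p$. One then needs the lower bound $\psi(C_n)>n^2/p$, using that $\psi(C_n)\ge \varphi(n)\,n$ together with a comparison of $\varphi(n)/n=\prod(1-1/q)$ with $1/p$, where $p$ is the largest prime. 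This bound is delicate, so I expect to need a refined count splitting elements by whether $o(x)$ is divisible by the full $p^a$. When $P$ is cyclic, Burnside's transfer argument yields a normal $p$-complement, which allows reduction to a smaller group. This case analysis, and the numerical inequality $\psi(C_n)> n^2/p$ (or a variant with $p^a$), is where the real work lies.
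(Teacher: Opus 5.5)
The paper does not prove Theorem A itself; it deduces it from Theorem B. Summing $o(x)\le o(f(x))$ over the bijection $f$ gives the inequality, and equality forces the preimage of a generator of $C_n$ to have order $n$. A self-contained inductive proof like yours is a legitimate alternative, but as written it has genuine gaps.

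In the normal-Sylow step you place the cyclicity on the complement, and you replace the key inequality by ``$o(x)\le o(y')o(h)$ for suitable elements'', which is not an argument. What one can actually compute is this: for a $p'$-element $x$, the $p'$-elements of the coset $xP$ are the $|P:C_P(x)|$ conjugates of $x$ under $P$. Hence $\sum_{y\in xP}o(y)=o(x)\,|P:C_P(x)|\,\psi(C_P(x))$. To get $\psi(G)\le\psi(P)\psi(G/P)$ you then need $|P:C|\psi(C)\le\psi(P)$. For cyclic $P$ this is the easy computation $p\,\psi(C_{p^k})<\psi(C_{p^{k+1}})$, which also gives equality iff $P$ is central. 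For subgroups of non-cyclic $p$-groups it fails in general: for $C_4<D_8$ one gets $2\cdot 11>19$.

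The non-normal case is not actually argued. ``If such elements are numerous \dots\ otherwise every element has order at most $n/p$'' is not a dichotomy, since having few such elements does not mean having none. ``Control on $N_G(P)$'' is never specified. The Burnside step is also false for your $p$. Burnside's theorem gives a normal $p$-complement from a cyclic Sylow $p$-subgroup when $p$ is the \emph{smallest} prime divisor. For the largest prime it fails: $S_3$ with $p=3$ and $A_5$ with $p=5$ have cyclic Sylow $p$-subgroups and no normal $p$-complement.

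The missing idea is to split on whether some element has order $>n/p$; this removes both the non-normal and the non-cyclic cases. The bound you call delicate is immediate for $n>1$: $\psi(C_n)>n\varphi(n)=n^2\prod_{q\mid n}(1-1/q)\ge n^2\prod_{k=2}^{p}(1-1/k)=n^2/p$.

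So suppose $\psi(G)\ge\psi(C_n)$. Then some $x$ has $o(x)>n/p$, i.e. $|G:\langle x\rangle|<p$. It follows that $p\nmid|G:\langle x\rangle|$, so $\langle x\rangle$ contains a Sylow $p$-subgroup $P$, which is therefore cyclic. Moreover $\langle x\rangle\le C_G(P)\le N_G(P)$, so $n_p$ divides $|G:\langle x\rangle|<p$, forcing $n_p=1$.

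Now the cyclic-normal-Sylow lemma and induction give $\psi(G)\le\psi(P)\psi(G/P)\le\psi(C_{p^a})\psi(C_{n/p^a})=\psi(C_n)$. Equality forces $P$ to be central and $G/P$ cyclic, hence $G$ is cyclic.

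This argument also covers $p$-groups, so your separate $p$-group case is unnecessary. Its justification via the power map in $C_n$ does not show that a group of order $p^a$ has at least $p^k$ solutions of $x^{p^k}=1$; a subgroup of order $p^k$ does.
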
Since then many authors have studied the properties of the function $\psi(G)$ and its relations with the structure of $G$ (see e.g. \cite{7}). Several divisibility conditions involving this function have been investigated in \cite{6,10,11}.

Note that Theorem A follows from the next result:

\begin{thmy}
If $G$ is a finite group of order $n$, then there is a bijection $f:G\longrightarrow C_n$ such that $o(x)$ divides $o(f(x))$, for all $x\in G$.
\end{thmy}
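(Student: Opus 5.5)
The plan is to construct $f$ with Hall's marriage theorem, applied to the bipartite graph $\Gamma$ with parts $G$ and $C_n$. We join $x\in G$ to $y\in C_n$ whenever $o(x)\mid o(y)$; a perfect matching in $\Gamma$ is exactly the required bijection. Every element order in $G$ and in $C_n$ divides $n$, so the neighbourhood of $x$ depends only on $o(x)$. For a set $S\subseteq G$, let $U$ be the set of divisors $e$ of $n$ that are multiples of some $o(x)$, $x\in S$. Then $U$ is an up-closed subset of the divisor lattice $D(n)$, and $N(S)=\{y\in C_n: o(y)\in U\}$. Moreover $S\subseteq\{x\in G: o(x)\in U\}$. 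So Hall's condition holds as soon as, for every up-closed $U\subseteq D(n)$,
\begin{equation}
|\{x\in G: o(x)\in U\}|\le |\{y\in C_n: o(y)\in U\}|.\nonumber
\end{equation}
Passing to complements and using $|G|=|C_n|$, this is equivalent to
\begin{equation}
|\{x\in G: o(x)\in L\}|\ge |\{y\in C_n: o(y)\in L\}|\nonumber
\end{equation}
for every down-closed $L\subseteq D(n)$.

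A down-closed $L$ is determined by its maximal elements $d_1,\dots,d_k$. It consists of the divisors of $n$ dividing some $d_i$, so the left side counts $\bigcup_i X_{d_i}$, where $X_d=\{x\in G: x^d=1\}$. In $C_n$ the corresponding set is a union of the unique subgroups of orders $d_1,\dots,d_k$. By inclusion--exclusion its size is
\begin{equation}
\sum_{\emptyset\ne I\subseteq\{1,\dots,k\}}(-1)^{|I|+1}\gcd(d_i:i\in I).\nonumber
\end{equation}
Thus the problem reduces to the inequality
\begin{equation}
\Bigl|\bigcup_{i=1}^k X_{d_i}\Bigr|\ge \Bigl|\bigcup_{i=1}^k \langle c^{n/d_i}\rangle\Bigr|,\nonumber
\end{equation}
where $c$ generates $C_n$. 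For $k=1$ this is Frobenius' theorem: $|X_d|$ is a positive multiple of $d$ when $d\mid n$, hence $|X_d|\ge d$.

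The main obstacle is the case $k\ge2$, since inclusion--exclusion does not preserve inequalities. I would argue by induction on $k$:
\begin{equation}
\Bigl|\bigcup_{i\le k} X_{d_i}\Bigr|=\Bigl|\bigcup_{i<k} X_{d_i}\Bigr|+|X_{d_k}|-\Bigl|\bigcup_{i<k}(X_{d_i}\cap X_{d_k})\Bigr|,\nonumber
\end{equation}
and $X_{d_i}\cap X_{d_k}=X_{\gcd(d_i,d_k)}$. This needs both a lower bound for $|X_{d_k}|$ and an \emph{upper} bound for the union of the $X_{\gcd(d_i,d_k)}$, and Frobenius only gives lower bounds.

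To get around this I would use the refined form of Frobenius' theorem. For a union $Y$ of conjugacy classes, the number of solutions of $x^d\in Y$ is a multiple of $\gcd(d,|C_G(Y)|)$; in particular $|X_d\setminus X_{d'}|$ is divisible by $d$ for $d'\mid d$. Then I would count elements by their exact order rather than by union. Set $m=d_k$, and consider $X_m$ minus the union of the $X_{\gcd(d_i,m)}$, $i<k$. I would show by a divisibility and induction argument that this set has at least as many elements as its cyclic analogue. This is essentially $\varphi$-type counting inside the cyclic group of order $m$, with the sizes of the sets of elements of order exactly $e$ in $G$ grouped by the lattice $D(m)$.

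If this direct route stalls, the fallback is induction on $|G|$ through a normal Sylow subgroup or a Hall subgroup, using a complement to build $f$ piecewise. The fallback needs $G$ solvable, so the general case would still require the counting argument above.
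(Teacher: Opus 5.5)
\paragraph{Assessment.} There is a genuine gap: your proposal reduces Theorem B to an inequality that is equivalent to it, and then does not prove that inequality.

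Your Hall-theorem reduction is correct, but it is an exact reformulation. A bijection as in Theorem B exists if and only if $|\bigcup_i X_{d_i}|\ge|\bigcup_i\langle c^{n/d_i}\rangle|$ for every family $d_1,\dots,d_k$ of divisors of $n$. So the reduction removes none of the difficulty: the case $k\ge 2$ is Theorem B itself, i.e.\ Isaacs' question (Problem 18.1 in [12]). The paper does not reprove it; it quotes it from M.~Amiri [3], after the partial results [9], [2].

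This is exactly where your proposal stops being a proof. The sentence ``I would show by a divisibility and induction argument that this set has at least as many elements as its cyclic analogue'' states the goal but gives no argument. Your induction needs an \emph{upper} bound on $|\bigcup_{i<k}X_{\gcd(d_i,d_k)}|$ in terms of $|X_{d_k}|$. That is, you need a reason why a group with surplus elements of order dividing the various $\gcd(d_i,d_k)$ must have a compensating surplus in $X_{d_k}$, and no mechanism for this is given.

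The congruences $d\mid |X_d|$ alone cannot supply this bound. For $n=12$, take $|X_d|=1,4,3,4,6,12$ for $d=1,2,3,4,6,12$. These values satisfy $d\mid|X_d|$ for every $d\mid 12$. They come from non-negative counts of elements of each exact order, namely $1,3,2,0,0,6$ elements of orders $1,2,3,4,6,12$. Yet $|X_4\cup X_6|=4+6-4=6<8$, which is smaller than the cyclic count.

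The one concrete tool you offer is also false: $|X_d\setminus X_{d'}|$ is not divisible by $d$ in general for $d'\mid d$. In $C_n$ itself it equals $d-d'$, which is never a multiple of $d$ when $d'<d$. For $d'=1$ it equals $|X_d|-1\equiv -1\pmod d$ in every group. Frobenius' theorem only yields divisibility by $d'$.

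Finally, the fallback through normal Sylow or Hall subgroups is not carried out. As you note yourself, it could at best reach solvable groups, leaving non-solvable (e.g.\ simple) groups untouched. So the proposal establishes the reduction and the case $k=1$, but not the theorem.
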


This has been formulated as a question by I.M. Isaacs (see Problem 18.1 in \cite{12}) and proved for some particular groups by F. Ladisch \cite{9} and M. Amiri and S.M. Jafarian Amiri \cite{2}. A proof for arbitrary groups has been recently given by M. Amiri \cite{3}. 

In the current note, we determine finite groups $G$ satisfying 
\begin{equation}
|H|-|K|\mid\psi(H)-\psi(K),\, \forall\, K\leq H\leq G.\footnote{This condition was inspired by the Mean Value Theorem in Mathematical Analysis.}
\end{equation}

Our main result is stated as follows.

\begin{theorem}
A finite group $G$ satisfies the condition (1) if and only if it is a $p$-group of exponent $p$.
\end{theorem}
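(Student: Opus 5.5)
The plan is to prove sufficiency by a direct computation, and necessity by testing condition (1) on well-chosen pairs $K\leq H$: first inside cyclic subgroups, then inside a minimal subgroup whose order has two prime divisors. Throughout, the case $H=K$ is read with the convention $0\mid 0$.

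\emph{Sufficiency.} Suppose $G$ is a $p$-group of exponent $p$. Then every subgroup $H$ has exponent dividing $p$, so $\psi(H)=1+p(|H|-1)$. Hence, for $K\leq H\leq G$,
\begin{equation}
\psi(H)-\psi(K)=p\,(|H|-|K|),\nonumber
\end{equation}
and (1) holds trivially.

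\emph{Necessity, step 1: every element order is $1$ or a prime.} Assume $G$ satisfies (1). Condition (1) passes to subgroups, so I would apply it to cyclic subgroups, using $\psi(C_n)=\sum_{d\mid n}d\,\varphi(d)$.
\begin{itemize}
\item Suppose $x\in G$ has order $p^2$. Take $H=\langle x\rangle$ and $K=1$. Then $p^2-1$ must divide $\psi(C_{p^2})-1=p(p-1)(1+p^2)$. This amounts to $p+1\mid p(1+p^2)$. Since $\gcd(p+1,p)=1$ and $1+p^2\equiv 2 \pmod{p+1}$, we would need $p+1\mid 2$, which is impossible.
\item Suppose $x$ has order $pq$ with $p\neq q$ primes. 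Take $H=\langle x\rangle$ and $K$ its subgroup of order $p$. Then $|H|-|K|=p(q-1)$, while
\begin{equation}
\psi(H)-\psi(K)=(1+p(p-1))\,q(q-1).\nonumber
\end{equation}
This forces $p\mid (p^2-p+1)q$, which is impossible.
\end{itemize}
Since any element of composite order has a power of order $p^2$ or $pq$, every nontrivial element of $G$ has prime order. In particular, every nilpotent subgroup of $G$ is a $p$-group: a nilpotent group whose order has two prime divisors contains an element of order $pq$.

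\emph{Necessity, step 2: $|G|$ is a prime power.} Suppose, for a contradiction, that $|G|$ has at least two prime divisors. Choose a subgroup $H$ minimal with respect to $|H|$ having two distinct prime divisors. Every proper subgroup of $H$ then has prime-power order, hence is nilpotent. By step 1, $H$ itself is not nilpotent, so $H$ is a minimal non-nilpotent (Schmidt) group. I will invoke Schmidt's theorem: $H=P\rtimes Q$ with $P$ a normal Sylow $p$-subgroup and $Q$ a cyclic Sylow $q$-subgroup. Since element orders are prime, $|Q|=q$ and $P$ has exponent $p$. Write $|P|=p^a$. Every element outside $P$ has order $q$, so
\begin{equation}
\psi(H)=1+p(p^a-1)+q\,p^a(q-1).\nonumber
\end{equation}
Now take $K=Q$. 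Then $|H|-|K|=q(p^a-1)$, while
\begin{equation}
\psi(H)-\psi(Q)=(p^a-1)\bigl(p+q(q-1)\bigr).\nonumber
\end{equation}
This would require $q\mid p$, a contradiction. Hence $G$ is a $p$-group, and by step 1 it has exponent $p$.

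The main obstacle is step 2, specifically justifying the structure of $H$. The argument needs minimality to force all proper subgroups to be of prime-power order, and then needs Schmidt's classification of minimal non-nilpotent groups. A route avoiding that classification would go through the known structure of ${\rm CP}_1$-groups (groups in which every nontrivial element has prime order). Schmidt's theorem seems the cleanest choice, and I would cite it as standard.
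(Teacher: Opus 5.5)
Your proof is correct. Your first step (excluding elements of order $p^2$ and $pq$ by testing cyclic subgroups) and your sufficiency computation are the same as the paper's, with the divisibility contradictions spelled out more carefully.

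The genuine difference is how you exclude groups whose order has two prime divisors. The paper invokes the full classification of ${\rm CP}_1$-groups (Lemma 2.1) and tests condition (1) on $G$ itself in each case. In the Frobenius case it uses the pair $Q\leq G$. For $A_5$ it computes $\psi(A_5)=211$ and uses a subgroup of order $2$. You instead use the fact that (1) is inherited by subgroups. You pass to a subgroup $H$ of minimal order among those whose order has two prime divisors, observe that $H$ is minimal non-nilpotent, and apply Schmidt's theorem to write $H=P\rtimes Q$. The computation the paper performs in its Frobenius case, now carried out inside $H$, then gives the contradiction.

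Your route never needs the non-solvable part of the ${\rm CP}_1$ classification. The identification of that case with $A_5$ rests on far heavier machinery than Schmidt's classical theorem. You also avoid the Frobenius and chief-series description. The $A_5$ case is absorbed automatically: its smallest subgroups of this kind are copies of $S_3$, where $|S_3|-|C_2|=4$ does not divide $\psi(S_3)-\psi(C_2)=10$. The paper's route is shorter on the page once the classification is granted, and it gives explicit witness pairs in $G$ itself for each case.
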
Its proof will be given in Section 2, while in Section 3 we will propose two conditions that are weaker than (1).

Most of our notation is standard and will usually not be repeated here. Elementary notions and results on groups can be found in \cite{8}.

\section{Proof of Theorem 1.1}

We start by recalling a well-known result that describes the class of ${\rm CP}_1$-groups, i.e. finite groups having all non-trivial elements of prime order.

\begin{lemma}[\cite{5,4}]
Let $G$ be a ${\rm CP}_1$-group. Then:
\begin{itemize}
\item[{\rm a)}] $G$ is nilpotent if and only if $G$ is a $p$-group of exponent $p$.
\item[{\rm b)}] $G$ is solvable and non-nilpotent if and only if $G$ is a Frobenius group with kernel $P\in Syl_p(G)$, with $P$ a $p$-group of exponent $p$ and complement $Q\in Syl_q(G)$, with $|Q|=q$. Moreover, if $|G|=p^nq$ then $G$ has a chief series 
\begin{equation}
G=G_0>P=G_1>G_2>\dots>G_k>G_{k+1}=1\nonumber 
\end{equation}such that for every $1\leq i\leq k$ one has $G_i/G_{i+1}\leq Z(P/G_{i+1})$, $Q$ acts irreducibly on $G_i/G_{i+1}$ and $|G_i/G_{i+1}|=p^b$, where $b$ is the exponent of $p\,\, {\rm (mod}\,\, q{\rm )}$.
\item[{\rm c)}] $G$ is non-solvable if and only if $G\cong A_5$.
\end{itemize}
\end{lemma}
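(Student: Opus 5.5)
Throughout, write $G$ for a ${\rm CP}_1$-group. The basic tool is an elementary observation. If $x,y\in G$ commute and have coprime prime orders $p\neq q$, then $xy$ has order $pq$, which is impossible. Hence the centralizer of every non-trivial element is a $p$-group for the prime $p=o(x)$, and every Sylow subgroup has prime exponent. Subgroups and quotients of ${\rm CP}_1$-groups are again ${\rm CP}_1$-groups; for quotients, a non-trivial coset $xN$ of $G/N$ with $o(x)=p$ has order dividing $p$, hence equal to $p$.

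For a), I use that a nilpotent group is the direct product of its Sylow subgroups. If two primes divided $|G|$, we would get commuting elements of distinct prime orders, which is impossible. So $G$ is a $p$-group, and all its non-trivial elements have order $p$, i.e. its exponent is $p$. The converse is trivial.

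For b), I let $G$ be solvable and non-nilpotent, and set $F=F(G)$. Since $F$ is a nilpotent ${\rm CP}_1$-group, a) shows that $F$ is a $p$-group of exponent $p$. I will then carry out the following steps.
\begin{itemize}
\item[(i)] Since $G$ is solvable, $C_G(F)\leq F$, so $G/F$ acts faithfully on $F$.
\item[(ii)] An element of $G$ of prime order $r\neq p$ has centralizer an $r$-group, so it fixes no non-trivial element of $F$. Thus every $p'$-element acts fixed-point-freely on $F$.
\item[(iii)] I want to show that $G/F$ is a $p'$-group. Let $\bar P$ be a Sylow $p$-subgroup of $G/F$ and let $N/F=O_p(G/F)$. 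Then $N$ is a normal $p$-subgroup of $G$, so $N\leq F$ and $O_p(G/F)=1$. The subtle point is to rule out $p$-elements of $G/F$ entirely. For this I would take a minimal normal subgroup $M/F$ of $G/F$. Solvability and $O_p(G/F)=1$ make it an elementary abelian $r$-group with $r\neq p$. A Sylow $r$-subgroup $R$ of $M$ then acts fixed-point-freely on $F$, so $FR$ is a Frobenius group. Since $R$ has exponent $r$ and Frobenius complements have cyclic or generalized quaternion Sylow subgroups, $|R|=r$. A Frattini argument gives $G=F\,N_G(R)$. If a $p$-element $y\notin F$ existed in $N_G(R)$, then, since $|{\rm Aut}(R)|=r-1$ and $C_G(R)$ is an $r$-group, $\langle R,y\rangle$ would be a Frobenius-type group $R\rtimes\langle y\rangle$ with $p\mid r-1$. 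I would derive a contradiction by letting this group act on $F$ and applying the standard fact that in $F\rtimes(R\rtimes\langle y\rangle)$ some element of order $p$ outside $F$ centralizes a non-trivial element of $F$, which produces an element of order $p^2$ or $pr$.
\item[(iv)] By (iii), $G/F$ is a Frobenius complement in which all elements have prime order. A complement has cyclic or quaternion Sylow subgroups and is not a non-cyclic ${\rm CP}_1$-group of the other types, so $G/F\cong C_q$. Hence $G=PQ$ with $P=F\in Syl_p(G)$ and $|Q|=q$.
\item[(v)] For the chief series, I refine the lower central series of $P$ into $Q$-invariant pieces, obtaining central chief factors $G_i/G_{i+1}$. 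On each factor $Q$ acts faithfully and irreducibly, since the action is fixed-point-free. A faithful irreducible $\mathbb F_p C_q$-module has dimension equal to the multiplicative order $b$ of $p$ modulo $q$, so $|G_i/G_{i+1}|=p^b$.
\end{itemize}
The converse in b) is immediate, because elements outside $P$ lie in conjugates of $Q$.

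For c), I note that $G$ is a CIT-group, i.e. every involution has a $2$-group centralizer, and $|G|$ is even by Feit--Thompson. I first reduce to the simple case. If $N$ is the solvable radical, $G/N$ has a minimal normal subgroup that is a direct power of a non-abelian simple ${\rm CP}_1$-group; it cannot be a proper power, since commuting factors would give elements of order $pq$. I then invoke Suzuki's classification of simple CIT-groups: ${\rm PSL}(2,q)$, $Sz(q)$ and ${\rm PSL}(3,4)$. Inspecting their element orders, the cyclic tori of orders $(q\pm1)/d$, or $q-1$ for $Sz(q)$, force $q=4$ or $5$, and ${\rm PSL}(3,4)$ contains elements of order $4$. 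This leaves ${\rm PSL}(2,4)\cong A_5$. Finally, I show $N=1$ and that $G$ equals this simple section. An $A_5$ acting on a non-trivial elementary abelian $p$-group always has an element of order $2$, $3$ or $5$ with a fixed point, and an outer automorphism of $A_5$ yields $S_5$, which contains elements of order $4$. I expect this step, relying on Suzuki's theorem, to be the main obstacle. The $p$-element exclusion in b)(iii) is the other delicate point.
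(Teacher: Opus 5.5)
\textbf{Assessment.} The paper gives no proof of this lemma; it quotes it from its references. So your sketch has to stand on its own. Part a), the Frattini/Frobenius-complement reduction in b)(iii), and step (v) are fine, but at the two points you yourself flag as delicate the argument does not work.

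In b)(iii), the ``standard fact'' that some element of order $p$ outside $F$ centralizes a non-trivial element of $F$ gives no contradiction. A $p$-element acting on the $p$-group $F$ always has non-trivial fixed points. Moreover, two commuting elements of order $p$ generate an elementary abelian group, so no element of order $p^2$ or $pr$ arises.

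The obstruction is the exponent, not fixed points. Take $V=Z(F)$, which is elementary abelian, normal, and satisfies $C_V(R)=1$. For $v\in V$ one has $(yv)^p=v^{y^{p-1}}\cdots v^{y}v$, so in a ${\rm CP}_1$-group the norm $1+y+\dots+y^{p-1}=(y-1)^{p-1}$ must annihilate $V$. Now $R\langle y\rangle$ is a Frobenius group of order $rp$ acting on $V$ in characteristic $p\neq r$ with $C_V(R)=1$. Hence $V$ is a free $\mathbb{F}_p\langle y\rangle$-module: over a splitting field, $y$ permutes the non-trivial $R$-eigenspaces in orbits of length $p$. So the norm is non-zero and some $yv$ has order $p^2$. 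This is the missing idea. For comparison, $S_4$ (with $F=V_4$, $R=\langle(123)\rangle$, $y=(12)$) fails to be ${\rm CP}_1$ because of the $4$-cycles in the coset $(12)V_4$, not because $y$ fixes $(12)(34)$.

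The same confusion breaks the last step of c). It is true that $A_5$ acting on a non-trivial elementary abelian $p$-group has an element of order $2$, $3$ or $5$ with a fixed point. But this yields a contradiction only if that element has order $\neq p$, and for $p=2$ no such element need exist. On the natural ${\rm SL}(2,4)$-module $\mathbb{F}_4^2$, every element of order $3$ or $5$ acts fixed-point-freely, and $2^4\rtimes{\rm SL}(2,4)$ is excluded only by its elements of order $4$.

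A correct finish runs as follows. Take a chief factor $V=N/K$ of $G$ inside the solvable radical $N$. Then $G/K$ is ${\rm CP}_1$, and $G/N\cong A_5$ acts faithfully on $V$.
\begin{itemize}
\item For $p=2$, every lift $x$ of an involution is an involution, and $(xv)^2=v^xv$ forces $x$ to centralize $V$. This contradicts faithfulness, since involutions generate $A_5$.
\item For $p$ odd, a Klein four-subgroup lifts (by Schur--Zassenhaus) to a four-group of automorphisms of $V$. Such a group cannot act fixed-point-freely, so some involution centralizes a non-trivial $v\in V$, giving an element of order $2p$.
\end{itemize}

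There are also two smaller slips.
\begin{itemize}
\item The tori do not rule out $Sz(q)$: for $q=8$ their orders are $7$, $13$, $5$, all prime. Use instead that Suzuki $2$-groups have exponent $4$.
\item In (iv), the appeal to ``other types'' should be replaced by an actual argument. A Frobenius complement all of whose Sylow subgroups have prime order is a solvable $Z$-group. So two primes $r,s$ would give a Hall subgroup of order $rs$, which must be cyclic in a Frobenius complement, and this produces an element of order $rs$.
\end{itemize}
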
\newpage

We are now able to prove our main result. We recall the general formula for computing the value of $\psi$ for cyclic $p$-groups, i.e.:
\begin{equation}
\psi(C_{p^{\alpha}})=\dd\frac{p^{2\alpha+1}+1}{p+1}=p^{2\alpha}-p^{2\alpha-1}+p^{2\alpha-2}-...-p+1.\nonumber
\end{equation}

\noindent{\bf Proof of Theorem 1.1.} Let $G$ be a finite group satisfying the condition (1). Firstly, we prove that $G$ has no element of order $p^2$, for any prime $p$. Indeed, if there exists $a\in G$ with $o(a)=p^2$, then
\begin{equation}
p^2-1\mid\psi(\langle a\rangle)-\psi(1)=p(p-1)(p^2+1),\nonumber
\end{equation}that is 
\begin{equation}
p+1\mid p(p^2+1),\nonumber
\end{equation}a contradiction.
Secondly, we prove that $G$ has no element of order $pq$, for any distinct primes $p$ and $q$. Indeed, if there exists $b\in G$ with $o(b)=pq$, then
\begin{equation}
pq-p\mid\psi(\langle b\rangle)-\psi(\langle b^q\rangle)=(p^2-p+1)(q^2-q),\nonumber
\end{equation}that is 
\begin{equation}
p\mid (p^2-p+1)q,\nonumber
\end{equation}a contradiction. It follows that $G$ is a ${\rm CP}_1$-group.

Assume that $G$ is a group as in item b) of Lemma 2.1. Then it has elements of order $1$, $p$, and $q$. The elements of order $p$ are exactly the non-trivial elements of $P$, while the elements of order $q$ are the non-trivial elements of all conjugates of $Q$. Thus, $G$ has one element of order $1$, $p^n-1$ elements of order $p$, and $(q-1)p^n$ elements of order $q$, which leads to
\begin{equation}
\psi(G)=1+p(p^n-1)+q(q-1)p^n.\nonumber
\end{equation}Since $\psi(Q)=1+q(q-1)$, we obtain
\begin{equation}
q(p^n-1)=|G|-|Q|\mid\psi(G)-\psi(Q)=[p+q(q-1)](p^n-1),\nonumber
\end{equation}implying that
\begin{equation}
q\mid p+q(q-1),\nonumber
\end{equation}a contradiction.

Assume that $G\cong A_5$. Then $\psi(G)=211$ and taking a subgroup $H\leq G$ of order $2$, we get
\begin{equation}
58=|G|-|H|\mid\psi(G)-\psi(H)=208,\nonumber
\end{equation}a contradiction.\newpage

Thus $G$ is a $p$-group of exponent $p$. Note that for any two subgroups $K\leq H\leq G$, if $|H|=p^{\alpha}$ and $|K|=p^{\beta}$, then
\begin{equation}
p^{\alpha}-p^{\beta}=|H|-|K|\mid\psi(H)-\psi(K)=p^{\alpha+1}-p^{\beta+1}.\nonumber
\end{equation}This completes the proof.\qed

\section{Two weaker conditions}

Given a finite group $G$, the condition (1) can be weakened to the condition
\begin{equation}
|H|-1\mid\psi(H)-1,\, \forall\, H\leq G.
\end{equation}Such a group $G$ also does not have elements of order $p^2$ for any prime $p$, but it can have elements of order $pq$ for certain distinct primes $p$ and $q$. The smallest example with this property is the cyclic group $\mathbb{Z}_6$, which satisfies (2), but not (1).

The condition (2) can be also weakened to the condition
\begin{equation}
|G|-1\mid\psi(G)-1.
\end{equation}We remark that $C_8$ is the smallest group satisfying (3) but not (2), while the dicyclic group ${\rm Dic}_3$ of order $12$ is the smallest non-cyclic group exhibiting this behavior.

These classes of finite groups can be further explored. 
\bigskip

\noindent{\bf Acknowledgements.} The author is grateful to the reviewer for remarks which improve the previous version of the paper.
\bigskip

\noindent{\bf Funding.} The author did not receive support from any organization for the submitted work.
\bigskip

\noindent{\bf Conflicts of interests.} The author declares that he has no conflict of interest.
\bigskip

\noindent{\bf Data availability statement.} My manuscript has no associated data.

\vspace*{3ex}\small

\hfill
\begin{minipage}[t]{5cm}
Marius T\u arn\u auceanu \\
Faculty of  Mathematics \\
``Al.I. Cuza'' University \\
Ia\c si, Romania \\
e-mail: {\tt tarnauc@uaic.ro}
\end{minipage}

\end{document}